\theoremstyle{definition} 
\newtheorem{thm}{Theorem}[section]
\newtheorem{cor}[thm]{Corollary}
\newtheorem{lem}[thm]{Lemma}
\newtheorem{conj}[thm]{Conjecture}
\newtheorem{exmp}[thm]{Example}
\newcommand{\N}{\mathbb{N}}
\newcommand{\Z}{\mathbb{Z}}
\newcommand{\Q}{\mathbb{Q}}
\newcommand{\C}{\mathbb{C}}
\newcommand{\A}{\mathbb{A}}
\newcommand{\F}{\mathbb{F}}
\newcommand{\G}{\mathbb{G}}
\begin{document}

\title[A converse of DML in positive characteristic]{A converse of dynamical Mordell--Lang conjecture in positive characteristic}
\author{Jungin Lee and GyeongHyeon Nam}
\date{}
\address{J. Lee -- Department of Mathematics, Ajou University, Suwon 16499, Republic of Korea \newline
\indent G. Nam -- Department of Mathematics, Ajou University, Suwon 16499, Republic of Korea}
\email{jileemath@ajou.ac.kr, gyeonghyeon.alg@gmail.com}

\begin{abstract}
In this paper, we prove the converse of the dynamical Mordell--Lang conjecture in positive characteristic: For every subset $S \subseteq \mathbb{N}_0$ which is a union of finitely many arithmetic progressions along with finitely many $p$-sets of the form $\left \{ \sum_{j=1}^{m} c_j p^{k_jn_j} : n_j \in \N_0 \right \}$ ($c_j \in \mathbb{Q}$, $k_j \in \mathbb{N}_0$), there exist a split torus $X = \mathbb{G}_m^k$ defined over $K=\overline{\mathbb{F}_p}(t)$, an endomorphism $\Phi$ of $X$, $\alpha \in X(K)$ and a closed subvariety $V \subseteq X$ such that $\left \{ n \in \N_0 : \Phi^n(\alpha) \in V(K) \right \} = S$.
\end{abstract}
\maketitle

\section{Introduction} \label{Sec1}

Throughout the paper, let $p$ be a prime, $\N$ be the set of positive integers and $\N_0 := \N \cup \left \{ 0 \right \}$. An \textit{arithmetic progression} in $\N_0$ is a set of the form $\left \{ a+bt : t \in \N_0 \right \}$ for some $a, b \in \N_0$. We use a convention that a \textit{variety} may not be irreducible. 

The \textit{dynamical Mordell--Lang conjecture} predicts that for a quasi-projective variety $X$ defined over a field $K$ of characteristic $0$, an endomorphism $\Phi$ of $X$, a point $\alpha \in X(K)$ and a closed subvariety $V \subseteq X$, the set of $n \in \N_0$ such that $\Phi^n(\alpha) \in V(K)$ is a finite union of arithmetic progressions in $\N_0$. It is one of the central problems in arithmetic dynamics. 

\begin{conj}\label{conj:dml}
(Dynamical Mordell--Lang conjecture; \cite[Conjecture 1.7]{GT09}) Let $X$ be a quasi-projective variety defined over a field $K$ of characteristic $0$, $\Phi$ be an endomorphism of $X$, $\alpha \in X(K)$ and $V \subseteq X$ be a closed subvariety. Then the set $\left \{ n \in \N_0 : \Phi^n(\alpha) \in V(K) \right \}$ is a union of finitely many arithmetic progressions.
\end{conj}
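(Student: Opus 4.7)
The plan is to follow the \emph{$p$-adic arc} approach pioneered by Bell and refined by Bell--Ghioca--Tucker, which generalizes the classical Skolem--Mahler--Lech theorem on linear recurrences. The first step is a spreading-out argument: since $X$, $\Phi$, $\alpha$, and $V$ involve only finitely many algebraic data, I would replace $K$ with a finitely generated $\Z$-subalgebra $R \subseteq K$ over which integral models of all of them are defined, then localize and complete at a maximal ideal $\mathfrak{m}$ of $R$ lying over a rational prime $p$ of good reduction. This yields an embedding $K \hookrightarrow \overline{\Q}_p$ under which $\alpha$ has integral coordinates and $\Phi$ extends to an endomorphism of the integral model with a well-behaved special fibre.

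The second step is to pass to a sufficiently high iterate. Because the residue of $\alpha$ has a finite orbit modulo $\mathfrak{m}$, I would choose $N \in \N$ so that $F := \Phi^N$ fixes the residue class of $\alpha$ and is congruent to the identity modulo a high enough power of the uniformizer on a small polydisc $U \subseteq X(\overline{\Q}_p)$ containing $\alpha$. Under such a congruence, a standard construction (exponentiating a $p$-adic logarithm of $F$ restricted to $U$) produces an analytic interpolation $\Theta \colon \Z_p \to X(\overline{\Q}_p)$ of the $F$-orbit, i.e.\ $\Theta(n) = F^n(\alpha)$ for every $n \in \N_0$. For each defining polynomial $f$ of $V$, the composite $f \circ \Theta$ is a convergent power series on $\Z_p$, and by Strassman's theorem either it vanishes identically or it has only finitely many zeros. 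Applying this simultaneously to a finite generating set of the ideal of $V$ shows that the set of $n \in \N_0$ with $F^n(\alpha) \in V(K)$ is either all of $\N_0$ or finite; decomposing into residue classes modulo $N$ then recovers $\{ n \in \N_0 : \Phi^n(\alpha) \in V(K) \}$ as a finite union of arithmetic progressions.

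The principal obstacle is the existence of the $p$-adic analytic interpolation $\Theta$. In the étale case — when $d\Phi$ is invertible at every orbit point — one can choose $N$ so that the eigenvalues of $dF$ at $\alpha$ are $1$-units, and the logarithm step goes through; this is precisely the theorem of Bell--Ghioca--Tucker. Without étaleness, $F$ may collapse tangent directions and its $p$-adic eigenvalues need not be $1$-units, so the logarithm is no longer defined and the interpolation fails at the naive level. Extending the argument would seem to require either stratifying $X$ along the non-étale locus and running a Noetherian induction on the Zariski closure of the orbit (replacing $(X,V)$ by a smaller pair on which infinitely many orbit points accumulate), or new $p$-adic analytic input capable of handling ramified iterates, for instance via Puiseux-type expansions adapted to the contracted directions.

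A secondary difficulty, which is mild by comparison, is that for non-projective $X$ the orbit need not remain inside the fixed polydisc $U$. One would handle this by covering the orbit with finitely many analytic neighborhoods on each of which the congruence-to-the-identity step can be arranged, and then assembling the resulting finite unions of arithmetic progressions, one from each neighborhood, into a single such union. This gluing is routine compared to the non-étale issue above, which is the genuine barrier separating the conjecture from the theorems currently available.
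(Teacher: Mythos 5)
The statement you were asked to prove is Conjecture \ref{conj:dml}, the dynamical Mordell--Lang conjecture itself; the paper does not prove it (it is an open problem, cited as \cite[Conjecture 1.7]{GT09}), so there is no proof in the paper to compare against. Your write-up is an accurate survey of the standard $p$-adic arc strategy of Bell and Bell--Ghioca--Tucker --- spreading out to a finitely generated $\Z$-algebra, embedding into $\overline{\Q}_p$ at a prime of good reduction, interpolating a suitable iterate of $\Phi$ by a $p$-adic analytic map on $\Z_p$, and invoking Strassman's theorem --- and you correctly identify that this recovers the known \'etale case, item (1) of the theorem quoted in Section \ref{Sec1}.

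However, by your own admission the argument has a genuine gap at its central step: when $\Phi$ is not \'etale along the orbit, the eigenvalues of $dF$ at $\alpha$ need not be $1$-units, the $p$-adic logarithm is undefined, and the analytic interpolation $\Theta$ does not exist. The fallback ideas you sketch (Noetherian induction on the Zariski closure of the orbit, Puiseux-type expansions in the contracted directions) are plausible directions but are not carried out, and indeed no one has carried them out in general --- this is exactly why the statement remains a conjecture. So the proposal is a correct description of the state of the art rather than a proof; it establishes the \'etale case and leaves the general case open.
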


We remark that if Conjecture \ref{conj:dml} holds for $K = \C$, then it holds for every field $K$ of characteristic $0$ \cite[Proposition 3.1.2.1]{BGT16}. There is extensive literature on various special cases of dynamical Mordell--Lang conjecture. We provide two significant examples.
\begin{thm}
    Conjecture \ref{conj:dml} is true for the following cases:
    \begin{enumerate}
    \item (Bell--Ghioca--Tucker \cite[Theorem 1.3]{BGT10}) $\Phi:X\rightarrow X$ is an \'etale morphism over $\C$,
    \item (Xie \cite[Theorem 1]{Xie17}, \cite[Theorem 1.4]{Xie23}) $\Phi:\A^2\rightarrow \A^2$ is a morphism over $\C$.
\end{enumerate}
\end{thm}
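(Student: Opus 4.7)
The plan is to handle (1) and (2) by quite different techniques. For (1), the Bell--Ghioca--Tucker étale case, I would follow the $p$-adic Skolem--Mahler--Lech strategy. First I would spread out: choose a finitely generated $\Z$-subalgebra $R \subseteq \C$ over which $X$, $\Phi$, $V$, and $\alpha$ are all defined, and locate a prime $\mathfrak{p}$ of $R$ of good reduction, with finite residue field $\F_q$. Since the reduced orbit lies in $X(\F_q)$, pigeonhole gives some iterate $\Phi_{\mathfrak{p}}^N$ fixing the reduction of $\Phi^a(\alpha)$ for each residue class $a \bmod N$. The étale hypothesis is crucial here: near such a fixed point the local differential of $\Phi_{\mathfrak{p}}^N$ is invertible, and after enlarging $N$ by a multiple one can arrange this differential to be congruent to the identity modulo a sufficiently high power of $\mathfrak{p}$. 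This lets me interpolate $n \mapsto \Phi^{a+Nn}(\alpha)$ to a $p$-adic analytic arc $\gamma_a : \Z_p \to X(\C_p)$ agreeing with the orbit on $\N_0$.

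Composing $\gamma_a$ with local defining equations of $V$ then yields $p$-adic analytic functions $f_a : \Z_p \to \C_p$ whose zeros in $\N_0$ are exactly the $n$ with $\Phi^{a+Nn}(\alpha) \in V$. By Strassmann's theorem, each $f_a$ is either identically zero — contributing the full arithmetic progression $a + N \N_0$ — or has only finitely many zeros. Summing over the finitely many residue classes $a \bmod N$ produces the desired finite union of arithmetic progressions, completing (1). The main technical obstacle here is the linearization step: one must simultaneously control the differential at all periodic residue classes, which requires a careful joint choice of $\mathfrak{p}$ and $N$, typically via Chebotarev-type counting over $\mathrm{GL}_d(\F_q)$.

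For (2), Xie's theorem on $\A^2$, the plan is substantially more intricate, since a general polynomial endomorphism of $\A^2$ need not be étale and the clean $p$-adic interpolation used above breaks down on the ramification locus. Following \cite{Xie17, Xie23}, I would first extend $\Phi$ to a projective compactification and analyze the induced dynamics at infinity using Picard--Manin spaces and the valuative tree. Next I would stratify by the first dynamical degree $\lambda_1(\Phi)$ and by whether the orbit of $\alpha$ is Zariski dense in $\A^2$: non-dense orbits are reduced to dimension-one DML on an invariant curve, while dense orbits with $\lambda_1 > 1$ are attacked through Call--Silverman canonical heights together with $p$-adic arguments on the loci where $\Phi$ behaves étale-like. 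The delicate remaining cases $\lambda_1 = 1$ are treated by an algebraic classification of the possible $\Phi$ (skew-product, automorphism, or preserving a fibration). The hard part of (2), and the reason Xie needed two long papers, is controlling the degenerate loci where $\Phi$ contracts curves and where no uniform $p$-adic linearization is available; any short reproof would have to bypass precisely this difficulty, and I do not see how to do so beyond invoking the existing machinery.
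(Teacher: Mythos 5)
This theorem is not proved in the paper at all: it is a background statement quoted verbatim from the literature, with the proofs residing entirely in \cite{BGT10} for part (1) and \cite{Xie17, Xie23} for part (2). So there is no ``paper's proof'' to compare against, and the honest assessment is that your proposal is a proof \emph{sketch} of the cited results rather than a proof. That said, your outline of (1) does accurately capture the Bell--Ghioca--Tucker strategy: spreading out to a finitely generated ring, choosing a prime of good reduction, using \'etaleness to linearize an iterate $\Phi^N$ near the periodic residue classes, interpolating the orbit by a $p$-adic analytic arc, and concluding via Strassmann's theorem that each residue class contributes either a full arithmetic progression or finitely many terms. The genuinely delicate points you would still need to supply are (i) the existence of a prime $\mathfrak{p}$ at which the model of $\Phi$ remains \'etale and the orbit stays integral, and (ii) the quantitative linearization (the differential congruent to the identity modulo a high enough power of $\mathfrak{p}$ that the interpolating series converges on all of $\Z_p$); these occupy most of \cite{BGT10}. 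For (2) your description is a fair roadmap of Xie's argument (dynamical degrees, the valuative tree at infinity, canonical heights for $\lambda_1>1$, and a classification when $\lambda_1=1$), but as you yourself acknowledge, it does not constitute a proof and cannot be completed without importing the full machinery of \cite{Xie17, Xie23}. Since the present paper only uses this theorem as motivating context and never invokes it in the proof of the main result, citing the references, as the paper does, is the appropriate treatment.
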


It is natural to ask whether Conjecture \ref{conj:dml} still holds when $K$ has a positive characteristic. The following example says that the assumption that $K$ has a characteristic $0$ in Conjecture \ref{conj:dml} is essential. 

\begin{exmp} \label{exmp1a}
(\cite[Example 1.2]{Ghi19}) Let $p$ be an odd prime, $X = \G_m^3$ be defined over $K=\F_p(t)$, $\Phi : X \rightarrow X$ be given by $\Phi(x,y,z)=(tx, (1+t)y, (1-t)z)$, $\alpha = (1,1,1) \in X(K)$ and $V \subset \G_m^3$ be the hyperplane defined by the equation $y+z-2x=2$. Then $\left \{ n \in \N_0 : \Phi^n(\alpha) \in V(K) \right \} = \left \{ p^{n_1}+p^{n_2} : n_1, n_2 \in \N_0 \right \}$.
\end{exmp}

Motivated by the work of Moosa and Scanlon \cite[Theorem B]{MS04}, Ghioca and Scanlon proposed a version of dynamical Mordell--Lang conjecture in positive characteristic. 

\begin{conj} \label{conj1b}
(Ghioca--Scanlon; \cite[Conjecture 13.2.0.1]{BGT16}) Let $K$ be a field of characteristic $p$, $X$ be a quasi-projective variety defined over $K$, $\Phi$ be an endomorphism of $X$, $\alpha \in X(K)$ and $V \subseteq X$ be a closed subvariety. Then the set $\left \{ n \in \N_0 : \Phi^n(\alpha) \in V(K) \right \}$ is a union of finitely many arithmetic progressions along with finitely many $p$\textit{-sets} of the form
\begin{equation} \label{eq1a}
\left \{ \sum_{j=1}^{m} c_j p^{k_jn_j} : n_j \in \N_0 \text{ for each } 1 \le j \le m \right \} \subset \N_0
\end{equation}
for some $m \in \N$, $c_1, \ldots, c_m \in \Q$ and $k_1, \ldots, k_m \in \N_0$.
\end{conj}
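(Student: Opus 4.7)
The plan is to adapt the Moosa-Scanlon structure theorem for $F$-sets on semiabelian varieties over function fields of positive characteristic to the dynamical setting. Moosa and Scanlon show that the intersection of a finitely generated subgroup of a semiabelian variety with a closed subvariety is a finite union of translates of images of $F$-subgroups, and decoding the exponents that arise from such a description yields exactly the unions of arithmetic progressions and $p$-sets of the form \eqref{eq1a} that Conjecture \ref{conj1b} predicts.

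First I would perform the standard reductions. Pass to $\overline{K}$, replace $X$ by the Zariski closure $Y$ of the forward orbit $\{\Phi^n(\alpha)\}_{n \geq 0}$, and replace $V$ by $V \cap Y$, so that the orbit is Zariski dense in $X$ and $V$ is a proper closed subvariety. Decomposing $X$ into irreducible components and replacing $\Phi$ by a suitable power $\Phi^N$ on each residue class $\{mN + r : m \in \N_0\}$, I may assume $X$ is irreducible and $\Phi$-invariant; the class of finite unions of arithmetic progressions and $p$-sets is closed under pulling back through such refinements.

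Second, I would reduce to the case where $X$ is an open subset of a semiabelian variety $G$ over $K$ and $\Phi$ factors as $\Phi(x) = \psi(x) + \beta$ for an isogeny $\psi$ of $G$ and a translation $\beta \in G(K)$. Under this assumption, the forward orbit is contained in the finitely generated $\Z[\psi]$-submodule of $G(K)$ generated by $\alpha$ and $\beta$, so its intersection with $V$ is controlled by Moosa-Scanlon, and extracting the set of indices $n$ that realize points of $V$ yields the desired combinatorial description. The $p$-sets arise from the interaction of $\psi$ with the relative Frobenius on the isotrivial part of $G$, while the arithmetic progressions arise from the torsion and trace parts.

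The main obstacle is precisely this reduction to the semiabelian case. In characteristic zero, and when $\Phi$ is étale, the analogous step is handled by Bell-Ghioca-Tucker via $p$-adic analytic interpolation of the orbit along a formal flow near a periodic point; in positive characteristic the formal flow can carry genuine Frobenius twists, as is already visible in Example \ref{exmp1a}, so the standard interpolation breaks down and no general substitute is currently available. A complete proof would likely require either a positive-characteristic analytic arc method (perhaps via rigid analytic $\tau$-sheaves), a Medvedev-Scanlon style classification of dynamically special $(X,\Phi)$ over $\overline{\F_p}(t)$, or an entirely new approach tailored to the Frobenius-driven obstructions.
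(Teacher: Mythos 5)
You are attempting to prove Conjecture \ref{conj1b}, which is an \emph{open conjecture}: the paper does not prove it (it proves a converse statement, Theorem \ref{mainthm}), so there is no proof in the paper to measure your attempt against, and any complete ``proof'' here should be viewed with suspicion. Judged on its own terms, your proposal has a genuine gap, which you yourself concede in the final paragraph: the reduction from an arbitrary quasi-projective variety $X$ with an arbitrary endomorphism $\Phi$ to the case of a semiabelian variety $G$ with $\Phi(x)=\psi(x)+\beta$ is simply not available. A general pair $(X,\Phi)$ --- already a polynomial self-map of $\A^2$ of degree at least $2$ --- admits no such model, and without that reduction the Moosa--Scanlon machinery never engages. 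The analogy with the Bell--Ghioca--Tucker \'etale case does not help, since as you note the $p$-adic interpolation of the orbit is exactly what fails in positive characteristic.

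Even if you restrict to the semiabelian case, the argument is incomplete at a second point. Moosa--Scanlon describe the intersection $V(K)\cap\Gamma$ for a finitely generated subgroup $\Gamma$ as a finite union of $F$-sets, but the conjecture asks for the set of exponents $n$ with $\Phi^n(\alpha)\in V(K)$, and translating the $F$-set description of the intersection into a description of this index set is precisely the hard step. It is carried out in \cite{CGSZ21} only when $\dim V\le 2$, or when no iterate of $\Phi$ induces a power of Frobenius on a nontrivial connected algebraic subgroup of $\G_m^N$ (see the discussion following Conjecture \ref{conj1b} in Section \ref{Sec1}). Your sketch correctly locates where the arithmetic progressions and the $p$-sets should come from, but the two reductions it leans on are exactly the open content of the conjecture, so what you have is a survey of the state of the art rather than a proof.
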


Conjecture \ref{conj1b} has been studied by several authors \cite{BGT16, Ghi19, CGSZ21, Yan23}. Notably, it has been proved in \cite{CGSZ21} when $X = \G_m^N$ and $V \subset \G_m^N$ has dimension at most $2$ \cite[Theorem 1.2]{CGSZ21}, or when $X = \G_m^N$ and $\Phi : \G_m^N \rightarrow \G_m^N$ is an algebraic group endomorphism such that no iterate of $\Phi$ induces a power of Frobenius on a nontrivial connected algebraic subgroup of $\G_m^N$ \cite[Theorem 1.3]{CGSZ21}. 

For a fixed field $K$ of characteristic $p$, we say $S \subseteq \N_0$ is a $K$-\textit{DML set} if there are $X$, $V$, $\Phi$ and $\alpha$ defined over $K$ as in Conjecture \ref{conj1b} such that $S = \left \{ n \in \N_0 : \Phi^n(\alpha) \in V(K) \right \}$. For a quasi-projective variety $X$ defined over $K$, We say $S \subseteq \N_0$ is a $K$-\textit{DML set over} $X$ if there are $V$, $\Phi$ and $\alpha$ defined over $K$ such that $S = \left \{ n \in \N_0 : \Phi^n(\alpha) \in V(K) \right \}$. We say $S \subseteq \N_0$ is a $K$-\textit{DML set over split torus} if it is a $K$-DML set over $\G_m^k$ for some $k \in \N$. 

For $m \in \N$, $c_1, \ldots, c_m \in \Q \setminus \left \{ 0 \right \}$, $k_1, \ldots, k_m \in \N_0$ and a prime power $q = p^k$, we denote
$$
B(q; c_1, \ldots, c_m; k_1, \ldots, k_m) := \left \{ \sum_{j=1}^{m} c_j q^{k_jn_j} : n_j \in \N_0  \text{ for each } 1 \le j \le m \right \}.
$$
We say a subset $S \subseteq \N_0$ is a $p$-\textit{set} if $S = B(p; c_1, \ldots, c_m; k_1, \ldots, k_m)$ for some $m \in \N$, $c_1, \ldots, c_m \in \Q \setminus \left \{ 0 \right \}$ and $k_1, \ldots, k_m \in \N_0$. Note that since we have
$$
B(q; c_1, \ldots, c_m; k_1, \ldots, k_m) = B(p; c_1, \ldots, c_m; kk_1, \ldots, kk_m)
$$
for $q=p^k$, the set $B(q; c_1, \ldots, c_m; k_1, \ldots, k_m)$ is a $p$-set if it is a subset of $\N_0$. Corvaja, Ghioca, Scanlon and Zannier \cite{CGSZ21} proved that a $p$-set $B(q; c_1, \ldots, c_m; k_1, \ldots, k_m)$ is a $\F_q(t)$-DML set over split torus when $c_j$ are small positive integers and $k_j=1$. (Take $u_n=n$ in \cite[Theorem 1.4]{CGSZ21}). Note that \cite[Theorem 1.4]{CGSZ21} is a direct consequence of \cite[Proposition 4.2 and 4.3]{CGSZ21} so we can replace the field $K$ with $\F_q(t)$.

\begin{thm} \label{thm1c}
(\cite[Theorem 1.4]{CGSZ21}) A $p$-set $B(q; c_1, \ldots, c_m; 1, \ldots, 1)$ is a $\F_q(t)$-DML set over split torus if $c_j$ are positive integers such that $\sum_{j=1}^{m} c_j < \frac{q}{2}$. 
\end{thm}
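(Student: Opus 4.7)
Proof plan. Set $K = \F_q(t)$, write $B := B(q; c_1, \ldots, c_m; 1, \ldots, 1)$, and let $C := \sum_{j=1}^{m} c_j$, so by hypothesis $C < q/2$. The plan is to exhibit an explicit diagonal dynamical system on a split torus over $K$ whose return set is exactly $B$, using the characteristic-$p$ Frobenius identity
\[
(1+\mu t)^{q^{a}} = 1 + \mu t^{q^{a}} \qquad (\mu \in \F_q,\ a \in \N_0),
\]
which immediately gives, for any $n = \sum_{j=1}^{m} c_j q^{n_j} \in B$, the factorization
\[
(1+\mu t)^n = \prod_{j=1}^{m} (1+\mu t^{q^{n_j}})^{c_j}.
\]

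Construction. Since $q - 1 > q/2 > C$, we may select pairwise distinct $\mu_1, \ldots, \mu_N \in \F_q^*$ with $N$ as large as needed (taking $N = q-1$ to use all of $\F_q^*$ gives the cleanest treatment of the reverse containment). Take $X = \G_m^{N+1}$ with coordinates $(x, y_1, \ldots, y_N)$ and define
\[
\Phi(x, y_1, \ldots, y_N) = \bigl(tx,\ (1+\mu_1 t) y_1,\ \ldots,\ (1+\mu_N t) y_N\bigr), \qquad \alpha = (1, 1, \ldots, 1),
\]
so that $\Phi^n(\alpha) = \bigl(t^n,\ (1+\mu_1 t)^n,\ \ldots,\ (1+\mu_N t)^n\bigr)$. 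By Vandermonde invertibility over $K$, choose coefficients $a_1, \ldots, a_N \in K$ so that the moment sequence $b_s := \sum_{j=1}^{N} a_j \mu_j^s$ satisfies $b_s = 0$ for $0 \le s \le C-1$, $b_C = 1$, and (in the $N = q-1$ case) $b_s = 0$ for $C < s \le q-2$. Finally, define $V \subset X$ by the single linear equation $\sum_{j=1}^{N} a_j y_j = x$.

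Verification and main obstacle. For the forward containment, given $n \in B$, substituting the Frobenius factorization above into $\sum_j a_j (1+\mu_j t)^n$ and collecting terms yields an expression of the form $\sum_{\mathbf{r} \in \prod[0, c_i]} \bigl(\prod_i \binom{c_i}{r_i}\bigr)\, b_{r_1+\cdots+r_m}\, t^{\sum r_i q^{n_i}}$; the moment conditions on $b$ kill every contribution except the one with $\mathbf{r} = (c_1, \ldots, c_m)$, leaving precisely $t^n$, so $\Phi^n(\alpha) \in V$. The main obstacle is the reverse containment. Writing $\sum_j a_j (1+\mu_j t)^n = t^n$ and matching coefficients of $t^k$ gives $\binom{n}{k} b_k = \delta_{k,n}$ for every $k$. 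Since $\mu_j \in \F_q^*$, the sequence $b$ is periodic of period dividing $q-1$ and its support lies in $\{k \equiv C \pmod{q-1}\}$; via Lucas's theorem this translates the coefficient conditions into digit-wise constraints on the base-$q$ expansion of $n$. The hypothesis $C < q/2$ enters twice and is essential: it ensures that $\sigma_q(n) \equiv C \pmod{q-1}$ forces the exact equality $\sigma_q(n) = C$ (since all admissible partial digit sums remain in a narrow band below $q-1$), and it rules out carries when decomposing the base-$q$ digits of $n$ into subset-sums of $\{c_1, \ldots, c_m\}$. A careful combinatorial argument on the base-$q$ digits then exhibits the required partition, yielding a representation $n = \sum_j c_j q^{n_j}$ and hence $n \in B$; additional equations in $V$ or an inductive refinement may be needed to complete this digit-by-digit analysis, and this is where the bulk of the technical work lies.
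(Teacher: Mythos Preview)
The paper does not give its own proof of this statement: Theorem~\ref{thm1c} is quoted from \cite{CGSZ21} (specialize their Theorem~1.4 to $u_n=n$), with only the remark that it is a direct consequence of \cite[Propositions~4.2 and~4.3]{CGSZ21}, whence the base field may be taken to be $\F_q(t)$. There is thus no in-paper argument to compare against.

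Your forward containment $B\subseteq\{n:\Phi^n(\alpha)\in V(K)\}$ is correct, but the reverse containment fails for the $V$ you write down, and the failure is structural rather than a missing detail. With $N=q-1$ and moments $b_s=[s=C]$ for $0\le s\le q-2$, unwinding $\sum_j a_j(1+\mu_j t)^n=t^n$ via Lucas (take $q=p$ for clarity) gives exactly the digit-sum condition $\sigma_p(n)=C$. This return set depends only on $C=\sum_j c_j$, not on the tuple $(c_1,\ldots,c_m)$: it coincides with $B(p;1,\ldots,1;1,\ldots,1)$ ($C$ ones) and is strictly larger than $B$ as soon as some $c_j>1$. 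For instance, with $q=p=7$ and $(c_1,c_2)=(1,2)$ one has $C=3<7/2$, and $n=1+7+7^2=57$ satisfies $\sigma_7(57)=3$, so $\sum_j a_j(1+\mu_j t)^{57}=b_3\,t^{57}=t^{57}$ and $57$ lies in your return set; yet $57\ne 7^{n_1}+2\cdot 7^{n_2}$ for any $n_1,n_2\in\N_0$, so $57\notin B(7;1,2;1,1)$. Your closing hedge (``additional equations \ldots\ may be needed'') acknowledges the gap but does not close it: any linear form in the coordinates $y_j=(1+\mu_j t)^n$ is governed by the same moment sequence $(b_s)$, which cannot tell the partition $(1,2)$ of $C$ from $(1,1,1)$, so a genuinely different construction---one that tracks the individual $c_j$---is required, and your plan does not indicate what it should be.
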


The goal of this paper is to prove the converse of the dynamical Mordell--Lang conjecture in positive characteristic, i.e., every union of finitely many arithmetic progressions in $\N_0$ along with finitely many $p$-sets is a $K$-DML set for some field $K$ of characteristic $p$. Now we state the main theorem of the paper. 

\begin{thm} \label{mainthm}
Let $S$ be a subset of $\N_0$ which is a union of finitely many arithmetic progressions along with finitely many $p$-sets. Then $S$ is a $\overline{\F_p}(t)$-DML set over split torus.
\end{thm}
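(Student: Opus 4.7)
The plan is to combine two standard closure properties with dedicated constructions for the two types of building blocks. First, the class of $\overline{\F_p}(t)$-DML sets over split tori is trivially closed under finite unions and intersections: given realizations $S_i = \{n : \Phi_i^n(\alpha_i) \in V_i(K)\}$ on $X_i = \G_m^{k_i}$ for $i=1,\ldots,r$, the product $X = \prod_i X_i$, $\Phi = \prod_i \Phi_i$, $\alpha = (\alpha_i)$ realizes $\bigcup_i S_i$ when $V = \bigcup_i \pi_i^{-1}(V_i)$ (with $\pi_i$ the coordinate projections), and realizes $\bigcap_i S_i$ when $V = \prod_i V_i$. It therefore suffices to show that every single arithmetic progression and every single $p$-set is a $\overline{\F_p}(t)$-DML set over split torus.

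For a single arithmetic progression $\{a+bt:t\in\N_0\}$ with $b \geq 1$, I would use the decomposition
\[
\{a+bt:t\in\N_0\} \;=\; \{n\in\N_0: n\geq a\}\cap\{n\in\N_0:n\equiv a\pmod b\}
\]
and realize each factor separately. The ``tail'' factor $\{n\geq a\}$ is realized on $X = \G_m^a$ by the shift endomorphism $\Phi(x_1,\ldots,x_a) = (c, x_1,\ldots,x_{a-1})$ with $c \in K^*$ and initial point $\alpha=(\alpha_1,\ldots,\alpha_a)$ chosen so that $\alpha_j\neq c$ for every $j$, together with $V=\{(c,\ldots,c)\}$: the orbit first reaches $(c,\ldots,c)$ at step $a$ and stays there for all $n\geq a$. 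For the congruence factor $\{n\equiv a\pmod b\}$, write $b=p^eb'$ with $\gcd(b',p)=1$: the residue modulo $b'$ is captured by $\G_m$ with $\Phi(x)=\zeta x$ for a primitive $b'$-th root of unity $\zeta\in\overline{\F_p}$, while the residue modulo $p^e$ is captured by $\G_m^{p^e}$ under the cyclic permutation endomorphism with an appropriately chosen pointlike $V$; the Chinese remainder theorem combines these into the full residue class modulo $b$. The degenerate case $b = 0$ (a singleton $\{a\}$) is realized on $\G_m$ by $\Phi(x)=tx$, $\alpha=1$, $V=\{t^a\}$.

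For a single $p$-set $B=B(p;c_1,\ldots,c_m;k_1,\ldots,k_m)$ I would first reduce to a common exponent. Let $k=\mathrm{lcm}(k_1,\ldots,k_m)$, $q=p^k$; writing each $n_j = (k/k_j)s_j + r_j$ with $0\leq r_j<k/k_j$ gives $c_j p^{k_j n_j} = (c_j p^{k_j r_j}) q^{s_j}$, so $B$ becomes a finite union (over $\vec{r}$) of $p$-sets of the form $B(q;c'_1,\ldots,c'_m;1,\ldots,1)$ for various rational $c'_j$. By the union closure, it remains to realize each such $B(q;c'_1,\ldots,c'_m;1,\ldots,1)$ as a DML set. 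Theorem~\ref{thm1c} does this when the $c'_j$ are positive integers with $\sum_j c'_j<q/2$; the remaining cases---negative, fractional, or sum $\geq q/2$ coefficients---are the main obstacle. I expect to handle them by adapting the CGSZ21 construction to a larger split torus: multiplying by a richer family of constants of the form $1+\lambda_j t$ with $\lambda_j\in\overline{\F_p}^*$ (which is available since we work over $\overline{\F_p}(t)$) so as to decouple, via Lucas's theorem, the $n$-dependent terms of $(1+\lambda_j t)^n$, and cutting out $V$ by hyperplanes whose coefficients encode the data $(c'_j, q)$, in the spirit of the identity $(1+t)^n+(1-t)^n-2t^n=2$ of Example~\ref{exmp1a}. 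Granted this extension, Theorem~\ref{mainthm} follows by combining the $p$-set and arithmetic progression realizations through the union closure.
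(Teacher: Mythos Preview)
Your reduction steps (closure under union, handling arithmetic progressions, and decomposing a general $p$-set into finitely many sets of the form $B(q;c'_1,\ldots,c'_m;1,\ldots,1)$) are fine and parallel the paper's Section~2. The genuine gap is in your last paragraph: you acknowledge that Theorem~\ref{thm1c} only covers the case $c'_j\in\N$, $\sum_j c'_j<q/2$, and then say you ``expect to handle'' the remaining cases by ``adapting the CGSZ21 construction.'' That is not a proof; it is precisely the hard part of the theorem, and your sketch gives no indication of how such an adaptation would go.

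Two remarks on what you are missing. First, the ``negative, fractional'' worry is a red herring. If you first clear denominators using the $mS\leftrightarrow S$ equivalence and use the $m+S\leftrightarrow S$ equivalence to make all $k_j>0$, then the requirement $B\subset\N_0$ forces every $c_j$ to be a positive integer (let one $n_j\to\infty$). So after your own decomposition the coefficients $c'_j=c_jp^{k_jr_j}$ are automatically positive integers; the only genuine obstacle is that $\sum_j c'_j$ may be $\geq q/2$. Second, the paper resolves this obstacle not by modifying the CGSZ construction but by a combinatorial induction on $m$. The key lemma is: assuming the theorem for $m-1$ variables, if $B(q;c_1,\ldots,c_m;1,\ldots,1)$ is realizable then so is $B(q;c_1,\ldots,c_{m-1},qc_m;1,\ldots,1)$, because the latter decomposes as $qB(q;c_1,\ldots,c_m;1,\ldots,1)$ together with shifts of $(m{-}1)$-variable $p$-sets. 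With this in hand, one passes to a large power $q^k$ with $\sum c_j<\tfrac12 q^{k/m}$, splits $B(q;\ldots)$ into pieces $B(q^k;c_1q^{i_1},\ldots,c_mq^{i_m};1,\ldots,1)$, and uses pigeonhole on the sorted exponents $i_1\geq\cdots\geq i_m$ to find a gap $i_r-i_{r+1}\geq k/m$; the key lemma then lets one ``rotate'' the large exponents down so that the resulting coefficient sum is below $q^k/2$, where Theorem~\ref{thm1c} applies. This inductive mechanism is the heart of the argument and is entirely absent from your proposal.
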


\subsection{Organization}
The paper is organized as follows. In Section \ref{Sec2}, we reduce the proof of Theorem \ref{mainthm} to a special form (Theorem \ref{mainthm_red}) using several lemmas. 
Section \ref{Sec3} is devoted to the proof of the main theorem. We prove Theorem \ref{mainthm_red} by dividing each $p$-set into smaller $p$-sets to which Theorem \ref{thm1c} can be applied and using lemmas in Section \ref{Sec2}.

\bigskip
\section{Reduction of the main theorem} \label{Sec2}
In this section, we provide a reduction of Theorem \ref{mainthm} to a simpler problem. From now on, we assume that $K$ is a field of characteristic $p$ and $K \neq \F_2$.

\begin{lem} \label{lem2a}
If $S_i$ is a $K$-DML set over $X_i$ for $1 \le i \le m$, then $\cup_{i=1}^{m} S_i$ and $\cap_{i=1}^{m} S_i$ are $K$-DML sets over $X_1 \times \cdots \times X_m$.
\end{lem}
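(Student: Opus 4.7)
The plan is to build the desired data on $X := X_1 \times \cdots \times X_m$ in the most naive product way, and read off the two set operations from two different choices of closed subvariety.

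For each $1 \le i \le m$, by hypothesis we can fix a closed subvariety $V_i \subseteq X_i$, an endomorphism $\Phi_i$ of $X_i$, and a point $\alpha_i \in X_i(K)$ with
\[
S_i \;=\; \{\, n \in \N_0 : \Phi_i^n(\alpha_i) \in V_i(K)\,\}.
\]
On the product $X$ I would take the obvious product endomorphism $\Phi := \Phi_1 \times \cdots \times \Phi_m$ and the obvious product point $\alpha := (\alpha_1,\ldots,\alpha_m) \in X(K)$, so that for every $n \in \N_0$
\[
\Phi^n(\alpha) \;=\; \bigl(\Phi_1^n(\alpha_1),\, \ldots,\, \Phi_m^n(\alpha_m)\bigr).
\]

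For the intersection, I would set $V^{\cap} := V_1 \times \cdots \times V_m \subseteq X$, which is closed. Then $\Phi^n(\alpha) \in V^{\cap}(K)$ holds iff $\Phi_i^n(\alpha_i) \in V_i(K)$ for every $i$, i.e.\ iff $n \in \bigcap_{i=1}^m S_i$. For the union, let $\pi_i : X \to X_i$ denote the $i$-th projection and take
\[
V^{\cup} \;:=\; \bigcup_{i=1}^{m} \pi_i^{-1}(V_i) \;\subseteq\; X.
\]
Each $\pi_i^{-1}(V_i) = X_1 \times \cdots \times V_i \times \cdots \times X_m$ is closed in $X$, and a finite union of closed subsets is closed; under the paper's convention that varieties need not be irreducible, $V^{\cup}$ is a legitimate closed subvariety. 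By construction $\Phi^n(\alpha) \in V^{\cup}(K)$ iff $\Phi_i^n(\alpha_i) \in V_i(K)$ for at least one $i$, i.e.\ iff $n \in \bigcup_{i=1}^m S_i$.

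There is no real obstacle here; the lemma is a bookkeeping observation exploiting that the dynamics on $X$ is the termwise product, so orbit-membership in a product subvariety decouples into the coordinate conditions, and the set-theoretic operations $\cap,\cup$ on the defining sets $S_i$ are realized by the corresponding scheme-theoretic product and set-theoretic union of the $V_i$. The only point worth flagging in writing it up is the explicit reminder that irreducibility is not required, which legitimizes the choice of $V^{\cup}$.
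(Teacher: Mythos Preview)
Your argument is correct and is essentially the same as the paper's: the paper reduces to $m=2$ and then uses the product endomorphism $(\Phi_1,\Phi_2)$ on $X_1\times X_2$ with $V_1\times V_2$ for the intersection and $V_1\times X_2 \cup X_1\times V_2$ for the union, likewise noting that irreducibility is not assumed. The only cosmetic difference is that you handle general $m$ directly rather than via the obvious induction to $m=2$.
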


\begin{proof}
It is enough to prove the lemma for $m=2$. Assume that $S_i = \left \{ n \in \N_0 : \Phi_i^n(\alpha_i) \in V_i(K) \right \}$ ($V_i \subseteq X_i$) is a $K$-DML set over $X_i$ for $i \in \left \{ 1, 2 \right \}$. Then we have 
$$
S_1 \cup S_2 = \left \{ n \in \N_0 : (\Phi_1, \Phi_2)^n(\alpha_1, \alpha_2) \in V(K) \right \}
$$ 
for $V = V_1 \times X_2 \cup X_1 \times V_2 \subseteq X_1 \times X_2$ (a variety is not assumed to be irreducible) and
$$
S_1 \cap S_2 = \left \{ n \in \N_0 : (\Phi_1, \Phi_2)^n(\alpha_1, \alpha_2) \in V(K) \right \}
$$ 
for $V = V_1 \times V_2 \subseteq X_1 \times X_2$. Thus $S_1 \cup S_2$ and $S_1 \cap S_2$ are $K$-DML sets over $X_1 \times X_2$. 
\end{proof}

\begin{lem} \label{lem2b}
Every arithmetic progression in $\N_0$ is a $K$-DML set over split torus.
\end{lem}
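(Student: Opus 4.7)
The plan is to write the given arithmetic progression $S = \left\{ a + bt : t \in \N_0 \right\}$ as an intersection of two simpler $K$-DML sets over split torus and then invoke Lemma \ref{lem2a}. The degenerate case $b = 0$, with $S = \{a\}$, is immediate: take $X = \G_m$, $\Phi(x) = tx$, $\alpha = 1$, and $V = \{t^a\}$, so that transcendence of $t$ over $\F_p$ forces $\Phi^n(\alpha) = t^n$ to lie in $V$ exactly when $n = a$.

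For $b \ge 1$, I let $r = a \bmod b$ and use the key identity
\[
S = \left\{ n \in \N_0 : n \equiv r \pmod b \right\} \cap \left\{ n \in \N_0 : n \ge a \right\}.
\]
By Lemma \ref{lem2a} it then suffices to realize each of these two sets as a $K$-DML set over a split torus. For the residue class I plan to use the cyclic shift endomorphism $\Phi(x_0, \dots, x_{b-1}) = (x_1, x_2, \dots, x_{b-1}, x_0)$ of $\G_m^b$, with $\alpha = (1, t, t^2, \dots, t^{b-1})$; transcendence of $t$ makes the coordinates of $\alpha$ pairwise distinct, so the $\Phi$-orbit of $\alpha$ has exact period $b$, and choosing $V = \{\Phi^r(\alpha)\}$ as a single point picks out precisely the iterates with $n \equiv r \pmod b$.

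For the tail $\{ n \ge a \}$ I plan to exploit a pre-periodic orbit under a power map. Fix an integer $c \ge 2$ coprime to $p$ (for instance $c = 2$ if $p$ is odd, $c = 3$ if $p = 2$); then $c^a$ is coprime to $p$, so $\overline{\F_p}^{\times} \subseteq K^{\times}$ contains a root of unity $\beta$ of exact order $c^a$. Taking $X = \G_m$, $\Phi(x) = x^c$, $\alpha = \beta$, and $V = \{1\}$, the iterate $\Phi^n(\alpha) = \beta^{c^n}$ equals $1$ if and only if $c^a \mid c^n$, i.e., if and only if $n \ge a$. Combining the two constructions via Lemma \ref{lem2a} then realizes $S$ as a $K$-DML set over the split torus $\G_m^{b} \times \G_m = \G_m^{b+1}$.

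The only potential subtlety, rather than a genuine obstacle, is that the constructions tacitly require $K$ to contain both a transcendental element $t$ (for the cyclic-shift step) and a primitive $c^a$-th root of unity for every $a \ge 0$ (for the tail step); both requirements are automatic in the setting $K \supseteq \overline{\F_p}(t)$ relevant for the main theorem.
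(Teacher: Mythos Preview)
Your argument is correct for $K = \overline{\F_p}(t)$ and hence suffices for the main theorem, but it does not prove Lemma~\ref{lem2b} in the generality in which it is stated: the standing hypothesis in Section~\ref{Sec2} is only that $K$ has characteristic $p$ and $K \ne \F_2$. Both of your constructions fail for, say, $K = \F_q$ with $q > 2$ (no element transcendental over $\F_p$, and only finitely many roots of unity). You flag this yourself at the end, but it is a genuine gap relative to the lemma as written, not merely a ``subtlety''.

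The paper avoids these extra hypotheses by using only a single element $c \in K \setminus \{0,1\}$, whose existence is exactly what $K \ne \F_2$ guarantees. For $b = 0$ it realizes $\{a\}$ via the shift $(x_1,\ldots,x_{a+1}) \mapsto (c,x_1,\ldots,x_a)$ on $\G_m^{a+1}$ acting on $\alpha = (1,c,\ldots,c)$, with $V = \{x_{a+1}=1\}$; for $b > 0$ (after reducing to $a > 0$ via $T_{a,b} = T_{a,0} \cup T_{a+b,b}$) it uses a single explicit shift-and-multiply endomorphism of $\G_m^{a+b}$ whose orbit of $(c,1,\ldots,1)$ hits $\{x_1 = \cdots = x_{a+b-1} = 1\}$ exactly along $T_{a,b}$. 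So the paper's approach is more elementary---no roots of unity, no transcendentals---and works over every $K \ne \F_2$; your decomposition into a residue class intersected with a tail is conceptually cleaner but purchases that clarity at the cost of stronger hypotheses on $K$.
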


\begin{proof}
We prove that an arithmetic progression $T_{a,b} := \left \{ a+bt : t \ge 0 \right \}$ ($a, b \in \N_0$) is a $K$-DML set over split torus. Since $K \ne \F_2$, there exists an element $c \in K \setminus \left \{ 0, 1 \right \}$. 

\begin{enumerate}
    \item $b=0$ : Let $X = \G_m^{a+1}$, $\alpha = (1, c, \ldots, c)$, $V = \left \{ (x_1, \ldots, x_{a+1}) \in X : x_{a+1}=1 \right \}$ and $\Phi : X \rightarrow X$ be given by $\Phi(x_1, \ldots, x_{a+1}) = (c, x_1, \ldots, x_{a})$. Then we have $\left \{ n \in \N_0 : \Phi^n(\alpha) \in V(K) \right \} = \left \{ a \right \} = T_{a,0}$ so $T_{a,0}$ is a $K$-DML set over $X$.
    
    \item $b>0$ : Since $T_{a,b} = T_{a,0} \cup T_{a+b, b}$, we may assume that $a>0$ by Lemma \ref{lem2a}. Let $X = \G_m^{a+b}$, $\alpha = (c, 1, \ldots, 1)$, $V = \left \{ (x_1, \ldots, x_{a+b}) \in X : x_1 = \cdots = x_{a+b-1}=1 \right \}$ and $\Phi : X \rightarrow X$ be given by 
    $$\Phi(x_1, \ldots, x_a, y_1, \ldots, y_b) = (1, x_1, \ldots, x_{a-1}, y_2, \ldots, y_b, x_ay_1).$$ 
    Then we have $\left \{ n \in \N_0 : \Phi^n(\alpha) \in V(K) \right \} = T_{a,b}$ so $T_{a,b}$ is a $K$-DML set over $X$. \qedhere
\end{enumerate}
\end{proof}

By Lemma \ref{lem2a} and \ref{lem2b}, it remains to show that every $p$-set $B(p; c_1, \ldots, c_m; k_1, \ldots, k_m)$ is a $\overline{\F_p}(t)$-DML set over split torus. The following lemma will be frequently used in the paper. For $S \subseteq \N_0$ and $m \in \N$, write $mS := \left \{ ms : s \in S \right \}$ and $m+S := \left \{ m+s : s \in S \right \}$.

\begin{lem} \label{lem2c}
Let $S \subseteq \N_0$ and $m \in \N$. 
\begin{enumerate}
    \item $mS$ is a $K$-DML set over split torus if and only if $S$ is a $K$-DML set over split torus.
    \item $m+S$ is a $K$-DML set over split torus if and only if $S$ is a $K$-DML set over split torus.
\end{enumerate}
\end{lem}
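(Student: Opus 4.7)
The proof of (1) decomposes into an easy backward and a nontrivial forward direction, and likewise for (2). For (1) backward, suppose $mS = \{n \in \N_0 : \Phi^n(\alpha) \in V(K)\}$ on a split torus $X$. Since $mS \subseteq m\N_0$, one has $t \in S$ iff $mt \in mS$ iff $(\Phi^m)^t(\alpha) \in V(K)$, so $S$ is realized by the same data $(X,\alpha,V)$ with $\Phi$ replaced by $\Phi^m$. For (2) backward, if $m+S = \{n : \Phi^n(\alpha) \in V(K)\}$, set $\alpha' := \Phi^m(\alpha) \in X(K)$; then $n \in S$ iff $n+m \in m+S$ iff $\Phi^n(\alpha') \in V(K)$, so $S$ is realized by $(X, \Phi, \alpha', V)$.

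For the forward direction of (1), given $S = \{n : \Phi^n(\alpha) \in V(K)\}$ on $X = \G_m^k$, the plan is to combine a cyclic shift on $X^m$ with a ``clock'' that kills non-multiples of $m$. By Lemma \ref{lem2b}, the progression $T_{0,m} = \{0, m, 2m, \ldots\}$ is a $K$-DML set over some split torus $Y$ via data $(\Phi_Y, \beta, W)$. I would take $X' := X^m \times Y$, $\alpha' := (\alpha, \ldots, \alpha, \beta)$, $V' := V^m \times W$, and
$$\Phi'(x_1, \ldots, x_m, y) := (x_2, \ldots, x_m, \Phi(x_1), \Phi_Y(y)).$$
A direct induction gives $(\Phi')^{mn}(\alpha') = ((\Phi^n(\alpha))^m, \Phi_Y^{mn}(\beta))$; the $W$-factor of $V'$ forces $n$ to lie in $T_{0,m}$, so $n = mt$ for some $t$, and then the $X^m$-factor lies in $V^m$ iff $\Phi^t(\alpha) \in V$ iff $t \in S$. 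Hence the hit set is exactly $mS$.

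For the forward direction of (2), I would use a delay chain of length $m$. If $V(K) = X(K)$ then $S = \N_0$ and $m+S = T_{m,1}$ is handled directly by Lemma \ref{lem2b}. Otherwise pick any $\gamma \in X(K) \setminus V(K)$ and take $X' := X^{m+1}$, $\alpha' := (\gamma, \ldots, \gamma, \alpha)$ (with $m$ copies of $\gamma$), $V' := V \times X^m$, and
$$\Phi'(x_0, x_1, \ldots, x_m) := (x_1, \ldots, x_m, \Phi(x_m)).$$
A short induction shows the first coordinate of $(\Phi')^n(\alpha')$ equals $\gamma$ for $n < m$ and $\Phi^{n-m}(\alpha)$ for $n \ge m$, so the hit set is precisely $m+S$. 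The main obstacle I anticipate is in (1) forward: without the clock factor $Y$, the cyclic shift on $X^m$ registers spurious hits at times $mt + r$ with $0 < r < m$ whenever both $t, t+1 \in S$, and eliminating them cleanly needs both Lemma \ref{lem2b} (to realize the clock) and Lemma \ref{lem2a} (to intersect the clock constraint with the original dynamics).
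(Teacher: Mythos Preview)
Your proof is correct and follows essentially the same approach as the paper: both directions of (1) and the backward direction of (2) are identical, and for (1) forward you use the same cyclic shift on $X^m$ intersected with the clock $T_{0,m}$ coming from Lemma~\ref{lem2b} (the paper just phrases the intersection via Lemma~\ref{lem2a} rather than building $Y$ into the product explicitly). The only cosmetic difference is in (2) forward, where the paper handles the shift by $1$ using $X' = X^2$ and then iterates $m$ times, while you do it in a single step with a length-$(m+1)$ delay chain; the underlying idea is the same.
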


\begin{proof}
\begin{enumerate}
    \item Assume that $mS$ is a $K$-DML set over $X=\G_m^k$. If $\left \{ n \in \N_0 : \Phi^n(\alpha) \in V(K) \right \} = mS$, then 
$$
\left \{ n \in \N_0 : (\Phi^m)^n(\alpha) \in V(K) \right \} = \left \{ n \in \N_0 : mn \in mS \right \} = S
$$
so $S$ is also a $K$-DML set over $X=\G_m^k$. Conversely, assume that $S$ is a $K$-DML set over $X = \G_m^k$ and $\left \{ n \in \N_0 : \Phi^n(\alpha) \in V(K) \right \} = S$. Define $X' = X^m = \G_m^{km}$, $V'=V \times X^{m-1}$, $\Phi' : X' \rightarrow X'$ ($\Phi'(x_1, \ldots, x_m) = (\Phi(x_m), x_1, \ldots, x_{m-1})$) and $\alpha' = (\alpha, \ldots, \alpha) \in X'(K)$. Then
$$
\left \{ n \in \N_0 : (\Phi')^n(\alpha') \in V'(K) \right \} \cap T_{0,m} = mS
$$
so $mS$ is a $K$-DML set over split torus by Lemma \ref{lem2a} and \ref{lem2b}.

\item Assume that $m+S$ is a $K$-DML set over $X=\G_m^k$. If $\left \{ n \in \N_0 : \Phi^n(\alpha) \in V(K) \right \} = m+S$, then
$$
\left \{ n \in \N_0 : \Phi^n(\Phi^{m}(\alpha)) \in V(K) \right \} = S
$$
so $S$ is also a $K$-DML set over $X=\G_m^k$. 
Conversely, assume that $S$ is a $K$-DML set over $X=\G_m^k$ and $\left \{ n \in \N_0 : \Phi^n(\alpha) \in V(K) \right \} = S$. 
If $S = \N_0$, then $m+S = T_{m, 1}$ is a $K$-DML set over split torus. Now assume that $S \neq \N_0$ and choose $\beta \in X(K) \setminus V(K)$. 
Define $X' = X^2 = \G_m^{2k}$, $V' = X \times V$, $\Phi' : X' \rightarrow X'$ ($\Phi'(x, y) = (\Phi(x), x)$) and $\alpha' = (\alpha, \beta) \in X'(K)$. Then 
$$
\left \{ n \in \N_0 : (\Phi')^n(\alpha') \in V'(K) \right \} = 1+S,
$$
so $1+S$ is a $K$-DML set over $X'$. Iterating this process, we conclude that $m+S$ is a $K$-DML set over split torus. \qedhere
\end{enumerate}
\end{proof}

By Lemma \ref{lem2c}(1), we may assume that $c_1, \ldots, c_m \in \Z \setminus \left \{ 0 \right \}$. By Lemma \ref{lem2c}(2), we may assume that $k_1, \ldots, k_m > 0$. In addition, since the sum $\sum_{j=1}^{m} c_j p^{k_jn_j}$ is nonnegative for every $n_1, \ldots, n_m \in \N_0$, we have $c_j \in \N$ for every $j$. Now for $k := k_1 \cdots k_m$, we have
\begin{equation*}
B(p; c_1, \ldots, c_m; k_1, \ldots, k_m) = \bigcup_{\substack{0 \le i_t \le k/k_t-1 \\ \text{for } 1 \le t \le m}} B(p; c_1p^{k_1i_1}, \ldots, c_mp^{k_mi_m} ; k, \ldots, k). 
\end{equation*}

\bigskip
\section{Proof of Theorem \ref{mainthm}} \label{Sec3}

In this section, we prove Theorem \ref{mainthm}. By the reduction arguments in Section \ref{Sec2}, it is enough to prove the following special form of Theorem \ref{mainthm}.

\begin{thm} \label{mainthm_red}
Every $p$-set 
$$
B(p; c_1, \ldots, c_m; k, \ldots, k) = B(q; c_1, \ldots, c_m; 1, \ldots, 1)
$$
($c_j \in \N$, $k \in \N$, $q=p^k$) is a $\overline{\F_p}(t)$-DML set over split torus.
\end{thm}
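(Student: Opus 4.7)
The plan is to decompose the $p$-set $B(q;c_1,\ldots,c_m;1,\ldots,1)$ into a finite union of smaller $p$-sets, each of which can be handled by Theorem \ref{thm1c} applied over a suitable subfield $\F_{q^N}(t)\subseteq\overline{\F_p}(t)$ together with Lemma \ref{lem2c}, and then to invoke Lemma \ref{lem2a} to reassemble the pieces. The combinatorial identity powering the decomposition is that, for any $N\ge 1$, the substitution $n_j = N\ell_j + r_j$ with $0\le r_j<N$ and $\ell_j\ge 0$ gives
\[
B(q;c_1,\ldots,c_m;1,\ldots,1) \;=\; \bigcup_{\vec r\in\{0,\ldots,N-1\}^m} B(q^N;c_1q^{r_1},\ldots,c_mq^{r_m};1,\ldots,1).
\]
For each piece, Lemma \ref{lem2c}(1) lets me factor out $q^{\min_j r_j}$, so WLOG the smallest of the new coefficients is $c_{j_0}$ itself. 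When $\sum_j c_j<q/2$ the trivial choice $N=1$ already puts us in the hypothesis of Theorem \ref{thm1c}.

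For general coefficients, I would take $N$ large enough that the ``small spread'' pieces — those with $\max_j r_j - \min_j r_j$ significantly less than $N$ — satisfy the coefficient-sum bound $<q^N/2$, and apply Theorem \ref{thm1c} directly to each. The remaining pieces are those with spread close to $N-1$: there at least one transformed coefficient is of size $\approx q^{N-1}$ and the hypothesis of Theorem \ref{thm1c} fails regardless of how $N$ is chosen. To handle these, I plan to isolate the dominant coordinate and strip it off using Lemma \ref{lem2c}(2) (translation) and Lemma \ref{lem2c}(1) (rescaling), reducing to a $p$-set with one fewer term or with smaller total coefficient sum. The recursion bottoms out at the single-term $p$-set $B(q;c;1) = c\cdot B(q;1;1)$, which by Lemma \ref{lem2c}(1) reduces to $B(q^N;1;1)$ and is covered by Theorem \ref{thm1c} as soon as $N\ge 2$.

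The main obstacle is ensuring the recursion terminates. A naive reapplication of $n_j = N\ell_j + r_j$ with a larger $N$ reproduces the same obstruction on the bad pieces (the ``worst corner'' $r_j = N-1$ just reappears), so one must actually decrease a well-founded monovariant — for instance, the pair $(m,\sum_j c_j)$ under lexicographic order — at each reduction step. Designing the peeling step so that the dominant term is genuinely removed without enlarging the remaining data is the technical heart of the argument, and this is where the flexibility afforded by Lemma \ref{lem2c}(2) is essential. Once every piece has been resolved into an $\F_{q^N}(t)$-DML set over split torus for some $N$, the inclusion $\F_{q^N}(t)\subseteq\overline{\F_p}(t)$ together with Lemma \ref{lem2a} finishes the proof.
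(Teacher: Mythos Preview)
Your outline is essentially the paper's proof: the same block decomposition $n_j=N\ell_j+r_j$, the same reduction to $\min_j r_j=0$ via Lemma~\ref{lem2c}(1), a pigeonhole argument on the gaps $r_j-r_{j+1}$ to locate a large jump, and induction on $m$ with base case $B(q;c;1)=c\,B(q^2;1;1)\cup qc\,B(q^2;1;1)$. The one thing you flag as unfinished --- the ``peeling step'' --- is exactly Lemma~\ref{lem3c} of the paper, whose point is not to remove the dominant coordinate but to lower its coefficient by a factor of $q$: the identity
\[
B(q;c_1,\ldots,c_{m-1},qc_m;1,\ldots,1)
=\bigcup_{i=1}^{m-1}\bigl(c_i+B(q;c_1,\ldots,\widehat{c_i},\ldots,c_{m-1},qc_m;1,\ldots,1)\bigr)\;\cup\;q\,B(q;c_1,\ldots,c_m;1,\ldots,1),
\]
obtained by splitting on whether some $n_i=0$ for $i<m$ or all $n_i\ge1$, does precisely what your monovariant $(m,\sum_j c_j)$ predicts: the translated pieces drop $m$ by one (handled by the inductive hypothesis via Lemma~\ref{lem2c}(2)), and the rescaled piece drops $\sum_j c_j$ (handled by Lemma~\ref{lem2c}(1)). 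Iterating this across the large gap brings every coefficient below $q^{N(1-1/m)}$, and then $\sum_j c_j<q^{N/m}/2$ gives the bound $<q^N/2$ needed for Corollary~\ref{cor3b}.
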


The key idea of the proof is to divide the $p$-set $B(q; c_1, \ldots, c_m; 1, \ldots, 1)$ into smaller $p$-sets to which Theorem \ref{thm1c} can be applied. 

\begin{lem} \label{lem3a}
Let $S$ be a $K$-DML set over $X$ and $L$ be an extension field of $K$. Then $S$ is a $L$-DML set over $X_L := X \times_K L$.
\end{lem}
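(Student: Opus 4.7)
The plan is to base-change the entire data of the $K$-DML set from $K$ to $L$ and verify that the resulting set equality is preserved. Write $S = \left\{ n \in \N_0 : \Phi^n(\alpha) \in V(K) \right\}$ for an endomorphism $\Phi : X \to X$, a point $\alpha \in X(K)$, and a closed subvariety $V \subseteq X$, all defined over $K$. I would form $X_L := X \times_K L$, define $\Phi_L := \Phi \times_K L$ (which is still an endomorphism of $X_L$ since base change preserves morphisms and composition), set $V_L := V \times_K L$ (which remains a closed subvariety of $X_L$ since closed immersions are stable under base change), and let $\alpha_L \in X_L(L)$ be the $L$-point induced by $\alpha$ via the inclusion $K \hookrightarrow L$ and the canonical map $X(K) \to X_L(L)$.

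The crux is then to show that
\[
\left\{ n \in \N_0 : \Phi_L^n(\alpha_L) \in V_L(L) \right\} = S.
\]
By functoriality of base change, $\Phi_L^n(\alpha_L) = (\Phi^n(\alpha))_L$, so this reduces to the claim: for every $\beta \in X(K)$, one has $\beta \in V(K)$ if and only if $\beta_L \in V_L(L)$. I would verify this claim locally. Choosing an affine open $\mathrm{Spec}\, A \subseteq X$ containing $\beta$, with $V$ cut out by an ideal $I \subseteq A$, the point $\beta$ corresponds to a $K$-algebra homomorphism $\beta : A \to K$, and $\beta \in V(K)$ iff $\beta(I) = 0$. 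After base change, $V_L$ is cut out in $A \otimes_K L$ by $I \otimes_K L$, and $\beta_L : A \otimes_K L \to L$ is the map $a \otimes \ell \mapsto \beta(a)\ell$; this kills $I \otimes_K L$ iff $\beta(I)\cdot L = 0$, iff $\beta(I) = 0$ (taking $\ell = 1$), which is exactly the condition $\beta \in V(K)$.

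There is really no obstacle here, only a bookkeeping check: the argument is a formal consequence of the compatibility of morphisms, iteration, and closed immersions with base change, together with the elementary observation above that a $K$-point of $X$ lies in $V$ iff its associated $L$-point of $X_L$ lies in $V_L$. Once this is assembled, $S$ is realized as a $K$-DML set over $X$ and simultaneously as an $L$-DML set over $X_L$.
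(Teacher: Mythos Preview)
Your proof is correct and follows essentially the same approach as the paper's: base-change the data $(X, V, \Phi, \alpha)$ to $L$ and observe that the orbit-intersection set is unchanged. The only difference is that you verify the key fact (a $K$-point of $X$ lies in $V$ iff its associated $L$-point lies in $V_L$) directly via an affine-local computation, whereas the paper phrases it as $V(L) \cap X(K) = V(K)$ and cites a reference.
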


\begin{proof}
Assume that $\left \{ n \in \N_0 : \Phi^n(\alpha) \in V(K) \right \} = S$ for $X$, $V$, $\Phi$ and $\alpha$. Then 
$X_L := X \times_K L$, $V_L := V \times_K L$, $\Phi_L : X_L \rightarrow X_L$ and $\alpha \in X(K) \subseteq X(L) = X_L(L)$ satisfies
$$
\left \{ n \in \N_0 : \Phi_L^n(\alpha) \in V_L(L) \right \} 
= \left \{ n \in \N_0 : \Phi^n(\alpha) \in V(L) \cap X(K) \right \} 
= S
$$
since $V(L) \cap X(K) = V(K)$ (cf. \cite[Remark 3.31]{Liu06}). 
\end{proof}

\begin{cor} \label{cor3b}
A $p$-set $B(q; c_1, \ldots, c_m; 1, \ldots, 1)$ is a $\overline{\F_p}(t)$-DML set over split torus if $c_j \in \N$ and $\sum_{j=1}^{m} c_j < \frac{q}{2}$.
\end{cor}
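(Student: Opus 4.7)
The plan is to obtain Corollary \ref{cor3b} as an immediate consequence of Theorem \ref{thm1c} combined with Lemma \ref{lem3a}. Since $q = p^k$ for some $k \in \N$, we have $\F_q \subseteq \overline{\F_p}$ and hence $\F_q(t) \subseteq \overline{\F_p}(t)$ is a field extension.

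First I would apply Theorem \ref{thm1c} to the given $p$-set. Because $c_1, \ldots, c_m$ are positive integers with $\sum_{j=1}^{m} c_j < q/2$, the hypotheses of Theorem \ref{thm1c} are met, so $B(q; c_1, \ldots, c_m; 1, \ldots, 1)$ is an $\F_q(t)$-DML set over a split torus $X = \G_m^N$ defined over $\F_q(t)$, for some $N \in \N$. Next I would invoke Lemma \ref{lem3a} with $K = \F_q(t)$ and $L = \overline{\F_p}(t)$ to conclude that the same $p$-set is an $\overline{\F_p}(t)$-DML set over $X_L = X \times_{\F_q(t)} \overline{\F_p}(t)$.

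Finally, one observes that the base change of the split torus $\G_m^N$ from $\F_q(t)$ to $\overline{\F_p}(t)$ remains the split torus $\G_m^N$ over $\overline{\F_p}(t)$, so the witness variety produced by Lemma \ref{lem3a} is still a split torus. This yields the claim.

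There is no genuine obstacle here; the statement is merely packaging Theorem \ref{thm1c} together with the base-change lemma. The only minor point worth noting is the compatibility of split tori with base change, which is immediate from the definition of $\G_m^N$ as an affine group scheme defined over $\Z$.
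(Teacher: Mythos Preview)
Your proof is correct and follows exactly the same approach as the paper: apply Theorem \ref{thm1c} to get an $\F_q(t)$-DML set over a split torus, then use Lemma \ref{lem3a} with the field extension $\F_q(t) \subseteq \overline{\F_p}(t)$. Your explicit remark that the base change of $\G_m^N$ remains a split torus is a small clarification the paper leaves implicit, but otherwise the arguments are identical.
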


\begin{proof}
$B(q; c_1, \ldots, c_m; 1, \ldots, 1)$ is a $\F_q(t)$-DML set over split torus by Theorem \ref{thm1c}. Since $\overline{\F_p}(t)$ is an extension field of $\F_q(t)$, $B(q; c_1, \ldots, c_m; 1, \ldots, 1)$ is a $\overline{\F_p}(t)$-DML set over split torus by Lemma \ref{lem3a}.
\end{proof}

The following lemma is an important ingredient in the proof of Theorem \ref{mainthm_red}.

\begin{lem} \label{lem3c}
Assume that Theorem \ref{mainthm_red} holds for $m = m_0$. If $B(q; c_1, \ldots, c_{m_0+1}; 1, \ldots, 1)$ is a $\overline{\F_p}(t)$-DML set over split torus, then $B(q; c_1, \ldots, c_{m_0}, qc_{m_0+1}; 1, \ldots, 1)$ is also a $\overline{\F_p}(t)$-DML set over split torus.
\end{lem}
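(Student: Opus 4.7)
The plan is to write
$$S := B(q; c_1, \ldots, c_{m_0}, qc_{m_0+1}; 1, \ldots, 1) = \left\{ c_1 q^{n_1} + \cdots + c_{m_0} q^{n_{m_0}} + c_{m_0+1} q^{n_{m_0+1}+1} : n_j \in \N_0 \right\}$$
as a finite union of $\overline{\F_p}(t)$-DML sets over split torus, and then invoke Lemma \ref{lem2a}. The decomposition case-splits a representation of an element of $S$ according to whether every exponent $n_i$ with $i \leq m_0$ is at least $1$ (in which case the element factors through $T := B(q; c_1, \ldots, c_{m_0+1}; 1, \ldots, 1)$) or some $n_{i_0}$ with $i_0 \leq m_0$ vanishes (in which case the term $c_{i_0}$ appears without a $q$-factor).

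Concretely, $qT = \{ qt : t \in T \}$ is a $\overline{\F_p}(t)$-DML set over split torus by the hypothesis on $T$ together with Lemma \ref{lem2c}(1). For each $i_0 \in \{1, \ldots, m_0\}$, set
$$S^{(i_0)} := c_{i_0} + B(q; c_1, \ldots, \widehat{c_{i_0}}, \ldots, c_{m_0}, qc_{m_0+1}; 1, \ldots, 1),$$
where $\widehat{c_{i_0}}$ indicates omission. The $p$-set appearing on the right-hand side is built from $m_0$ positive-integer coefficients (the $m_0 - 1$ surviving $c_i$'s together with $qc_{m_0+1}$), so the hypothesis that Theorem \ref{mainthm_red} holds for $m = m_0$ shows it is a $\overline{\F_p}(t)$-DML set over split torus, and Lemma \ref{lem2c}(2) then gives the same for $S^{(i_0)}$.

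With these pieces in hand, I would establish the set identity
$$S = qT \cup \bigcup_{i_0 = 1}^{m_0} S^{(i_0)}.$$
The inclusion $\supseteq$ is immediate, since $qT$ and each $S^{(i_0)}$ are visibly subsets of $S$. For $\subseteq$, given $s \in S$ with a representation $s = \sum_{j=1}^{m_0} c_j q^{n_j} + c_{m_0+1} q^{n_{m_0+1}+1}$, either every $n_j$ with $j \leq m_0$ is at least $1$---in which case $s = q\bigl( \sum_{j=1}^{m_0} c_j q^{n_j - 1} + c_{m_0+1} q^{n_{m_0+1}} \bigr) \in qT$---or some $n_{i_0}$ equals $0$, and then $s \in S^{(i_0)}$. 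Lemma \ref{lem2a} applied to this finite union finishes the proof.

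I do not foresee a serious obstacle: every ingredient (Lemmas \ref{lem2a} and \ref{lem2c}) is already in place, and the set-theoretic manipulations are routine. The only point needing a moment of care is confirming that the $p$-set inside each $S^{(i_0)}$ has \emph{exactly} $m_0$ summands, which is precisely what allows the $m = m_0$ hypothesis of Theorem \ref{mainthm_red} to apply.
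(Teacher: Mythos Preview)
Your proof is correct and follows essentially the same approach as the paper: both arguments rest on the identical decomposition $S = qT \cup \bigcup_{i_0=1}^{m_0} S^{(i_0)}$, handle $qT$ via Lemma~\ref{lem2c}(1), handle each $S^{(i_0)}$ via the $m=m_0$ case of Theorem~\ref{mainthm_red} together with Lemma~\ref{lem2c}(2), and conclude with Lemma~\ref{lem2a}. Your write-up even makes the two inclusions in the set identity more explicit than the paper does.
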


\begin{proof}
We have
\begin{align*}
& B(q; c_1, \ldots, c_{m_0}, qc_{m_0+1}; 1, \ldots, 1) \\
= & \bigcup_{i=1}^{m_0} \left ( c_i + B(q; c_1, \ldots, c_{i-1}, c_{i+1}, \ldots, c_{m_0}, qc_{m_0+1}; 1, \ldots, 1) \right ) \cup B(q; qc_1, \ldots, qc_{m_0+1}; 1, \ldots, 1) \\
= & \bigcup_{i=1}^{m_0} \left ( c_i + B(q; c_1, \ldots, c_{i-1}, c_{i+1}, \ldots, c_{m_0}, qc_{m_0+1}; 1, \ldots, 1) \right ) \cup qB(q; c_1, \ldots, c_{m_0+1}; 1, \ldots, 1).
\end{align*}
By Lemma \ref{lem2c}(2) and the assumption that Theorem \ref{mainthm_red} holds for $m=m_0$, each set 
$$
c_i + B(q; c_1, \ldots, c_{i-1}, c_{i+1}, \ldots, c_{m_0}, qc_{m_0+1}; 1, \ldots, 1)
$$ 
is a $\overline{\F_p}(t)$-DML set over split torus. By Lemma \ref{lem2c}(1), $qB(q; c_1, \ldots, c_{m_0+1}; 1, \ldots, 1)$ is a $\overline{\F_p}(t)$-DML set over split torus. Thus $B(q; c_1, \ldots, c_{m_0}, qc_{m_0+1}; 1, \ldots, 1)$ is a $\overline{\F_p}(t)$-DML set over split torus by Lemma \ref{lem2a}(1).
\end{proof}

\begin{proof}[Proof of Theorem \ref{mainthm_red}]
When $m=1$, we have 
$$
B(q; c_1; 1)=c_1B(q; 1; 1)=c_1B(q^2; 1; 1) \cup qc_1B(q^2; 1; 1).
$$
The $p$-set $B(q^2; 1; 1)$ is a $\overline{\F_p}(t)$-DML set over split torus by Corollary \ref{cor3b} so $B(q; c_1; 1)$ is a $\overline{\F_p}(t)$-DML set over split torus by Lemma \ref{lem2a} and \ref{lem2c}(1).

We proceed by induction on $m$ and assume that Theorem \ref{mainthm_red} holds for $m-1 \in \N$. Choose a sufficiently large $k \in \N$ such that $c_1 + \cdots + c_m < \frac{q^{\frac{k}{m}}}{2}$. Then we have
$$
B(q; c_1, \ldots, c_m; 1, \ldots, 1)=\underset{\substack{0 \leq i_t \leq k-1\\ \text{for } 1 \le t \le m}}{\bigcup} B(q^k; c_1q^{i_1}, \ldots, c_mq^{i_m}; 1, \ldots, 1)
$$
so it is enough to show each each $p$-set $B(q^k; c_1q^{i_1}, \ldots, c_mq^{i_m}; 1, \ldots, 1)$ is a $\overline{\F_p}(t)$-DML set over split torus. For $i_{\min} := \min(i_1, \ldots, i_m)$, we have 
$$
B(q^k; c_1q^{i_1}, \ldots, c_mq^{i_m}; 1, \ldots, 1) = q^{i_{\min}} B(q^k; c_1q^{i_1-i_{\min}}, \ldots, c_mq^{i_m-i_{\min}}; 1, \ldots, 1)
$$
so we may assume that $\min(i_1, \ldots, i_m)=0$ by Lemma \ref{lem2c}(1). Without loss of generality, assume that $k = i_0 \ge i_1 \ge \cdots \ge i_m =0$. 

Since $\sum_{t=0}^{m-1} (i_t-i_{t+1}) = i_0-i_m = k$, we have $\max(i_0-i_1, \ldots, i_{m-1}-i_m) \ge \frac{k}{m}$. Choose $0 \le r \le m-1$ such that $i_r-i_{r+1} = \max(i_0-i_1, \ldots, i_{m-1}-i_m) \ge \frac{k}{m}$. Then we have
    $$
    q^{k-i_r}B(q^k; c_1q^{i_1}, \ldots, c_mq^{i_m}; 1, \ldots, 1) = B(q^k; c_1q^{k-i_r+i_1}, \ldots, c_mq^{k-i_r+i_m}; 1, \ldots, 1).
    $$
    By Lemma \ref{lem2c}(1), the induction hypothesis and Lemma \ref{lem3c}, the $p$-set
    $$
    B(q^k; c_1q^{i_1}, \ldots, c_mq^{i_m}; 1, \ldots, 1)
    $$
    is a $\overline{\F_p}(t)$-DML set over split torus if the $p$-set
    $$
    B(q^k; c_1q^{i_1-i_r}, \ldots, c_{r-1}q^{i_{r-1}-i_r}, c_r, c_{r+1}q^{k-i_r+i_{r+1}}, \ldots, c_mq^{k-i_r+i_m}; 1, \ldots, 1)
    $$
    is a $\overline{\F_p}(t)$-DML set over split torus.
    Now we have 
\begin{align*}
& c_1q^{i_1-i_r} + \cdots + c_{r-1}q^{i_{r-1}-i_r} + c_r + c_{r+1}q^{k-i_r+i_{r+1}} + \cdots + c_mq^{k-i_r+i_m} \\
\le \, & q^{k-i_r+i_{r+1}}(c_1 + \cdots + c_m) \\
< \, & q^{k-\frac{k}{m}} \cdot \frac{q^{\frac{k}{m}}}{2} \\
= \, & \frac{q^k}{2}    
\end{align*}
so the $p$-set $ B(q^k; c_1q^{i_1}, \ldots, c_mq^{i_m}; 1, \ldots, 1)$ is a $\overline{\F_p}(t)$-DML set over split torus by Corollary \ref{cor3b}. 
\end{proof}

\bigskip
\section*{Acknowledgments}

J. Lee was supported by the new faculty research fund of Ajou University (S-2023-G0001-00236). The authors thank Dragos Ghioca for helpful comments.


\end{document}